\documentclass[11pt, reqno]{amsart}
\usepackage[margin=3.5cm]{geometry}
\usepackage{mathabx}
\usepackage{cancel}
\usepackage{some_macros}

\setcounter{tocdepth}{2}% to get subsubsections in toc

\let\oldtocsection=\tocsection
\let\oldtocsubsection=\tocsubsection

\newcommand{\treesor}{\mathfrak{T}^\op{or}}
\newcommand{\treesun}{\mathfrak{T}^\op{un}}
\newcommand{\Vin}{V_\op{in}}
\newcommand{\Vmov}{V_\op{mov}}
\newcommand{\Vleaf}{V_\op{leaf}}

\newcommand{\aut}{\op{Aut}}
\newcommand{\orb}{\mathfrak{o}}
\newcommand{\aug}{\epsilon}

\newcommand{\shuf}{\op{Sh}}
\newcommand{\shufbar}{\ovl{\shuf}}

\newcommand{\veca}{{\vec{a}}}

\newcommand{\mc}{\mathfrak{m}}

\newcommand{\setm}{\;\setminus\;}

\newcommand{\Ga}{\Gamma}

\newcommand{\dg}{\frak{q}}
\newcommand{\pt}{{pt}}
\newcommand{\mult}{\op{mult}}

\newcommand{\leaf}{\ell}
\newcommand{\calS}{\mathcal{S}}
\newcommand{\Tcount}{\mathbf{T}}
\newcommand{\wtTcount}{\wt{\Tcount}}

\newcommand{\intE}{{\mathring{E}}}

\renewcommand{\tocsection}[2]{\hspace{0em}\oldtocsection{#1}{#2}}
\renewcommand{\tocsubsection}[2]{\hspace{1em}\oldtocsubsection{#1}{#2}}

\tikzset{node distance=3cm, auto}

\makeatletter
\def\@secnumfont{\bfseries}
\def\section{\@startsection{section}{1}%
  \z@{.7\linespacing\@plus\linespacing}{.5\linespacing}%
  {\normalfont\Large\bfseries}}
\def\acksection{\@startsection{subsubsection}{1}%
  \z@{.7\linespacing\@plus\linespacing}{.5\linespacing}%
  {\normalfont\Large\bfseries}}
\def\subsection{\@startsection{subsection}{2}%
  \z@{.75\linespacing\@plus.7\linespacing}{-.5em}%
  {\normalfont\large\bfseries}}
\def\subsubsection{\@startsection{subsubsection}{3}%
  \z@{.75\linespacing\@plus.7\linespacing}{-.5em}%
  {\normalfont\bfseries}}
\makeatother

\DeclareGraphicsRule{.tif}{png}{.png}{`convert #1 `dirname #1`/`basename #1 .tif`.png}

\newtheorem{thm}{Theorem}[section]
\newtheorem{thmlet}{Theorem}

\newtheorem{prop}[thm]{Proposition}

\newtheorem{corlet}[thmlet]{Corollary}

\newtheorem{notation}[thm]{Notation}

\newtheorem{conjecture}[thm]{Conjecture}
\newtheorem{definition}[thm]{Definition}

\theoremstyle{remark}
\numberwithin{equation}{section} 

\newtheoremstyle{customremark}% <name>
{8pt}% <Space above>
{8pt}% <Space below>
{}% <Body font>
{}% <Indent amount>
{\bfseries}% <Theorem head font>
{.}% <Punctuation after theorem head>
{.5em}% <Space after theorem headi>
{}% <Theorem head spec (can be left empty, meaning `normal')>
\theoremstyle{customremark}
\newtheorem{rmk_no_diamond}[thm]{Remark}
\newenvironment{rmk}{\begin{rmk_no_diamond} } {\hfill$\Diamond$ \end{rmk_no_diamond}}
\newtheorem{example_no_diamond}[thm]{Example}
\newenvironment{example}{\begin{example_no_diamond} } {\hfill$\Diamond$ \end{example_no_diamond}}

\begin{document}

\makeatother

\date{\today}

\title{A tree formula for the ellipsoidal superpotential of the complex projective plane}
\date{\today}
\pagestyle{plain}

\begin{abstract}
The ellipsoidal superpotential of the complex projective plane can be interpreted as a count of rigid rational plane curves of a given degree with one prescribed cusp singularity.
In this note we present a closed formula for these counts as a sum over trees with certain explicit weights.  This is a step towards understanding the combinatorial underpinnings of the ellipsoidal superpotential and its mysterious nonvanishing and nondecreasing properties.
\end{abstract}

\author{Kyler Siegel}
\thanks{K.S. is partially supported by NSF grant DMS-2105578.}

\maketitle

\tableofcontents

\section{Introduction}

Given a closed symplectic manifold $M^{2n}$, a homology class $A \in H_2(M)$, and a tuple of positive real numbers $\veca = (a_1,\dots,a_n) \in \R_{>0}^n$, the {\bf ellipsoidal superpotential} $\Tcount_{M,A}^\veca \in \Q$ is an enumerative invariant which encodes important information about (a) stabilized symplectic embeddings of ellipsoids into $M$ and (b) singular rational curves in $M$.
In this note we focus on the case of the complex projective plane $M = \CP^2$, and we put $\Tcount_{d}^a := \Tcount_{\CP^2,d[L]}^{(1,a)}$ for $a \in \R_{> 0}$\footnote{In the definition it is convenient to assume that $a$ is irrational, and we extend this to rational $a \in \R_{>0}$ by the convention $\Tcount_d^a := \Tcount_d^{a + \delta}$ for $\delta > 0$ sufficiently small.} and $d \in \Z_{\geq 1}$, where $[L] \in H_2(\CP^2)$ is the line class.

For instance, when $\Tcount_{d}^a \neq 0$ we get an obstruction to symplectic embeddings of the form $E(\mu,\mu a) \times \C^N \hooksymp \CP^2 \times \C^N$ for $\mu \in \R_{>0}$ and $N \in \Z_{\geq 0}$, and it is expected that together these give a complete set of stable obstructions for symplectic embeddings of ellipsoids into $\CP^2$ (c.f. \cite[\S2.7]{cusps_and_ellipsoids}).
Moreover, when $a = p/q$ is a reduced fraction such that $p+q =3d$, we have $\Tcount_d^{p/q} \neq 0$ if and only if there exists a genus zero degree $d$ singular symplectic curve in $\CP^2$ which has one $(p,q)$ cusp and is otherwise positively immersed (see \cite[Thm. E]{cusps_and_ellipsoids}).
We call such singular curves {\bf sesquicuspidal}, and their existence in both the algebraic and symplectic categories are subtle problems which are closely linked.

To define $\Tcount_{M,A}^\veca$, we first consider the compact symplectic manifold with boundary $M_\veca := M \setm \iota(\intE(\eps\veca))$, where $\iota: E(\eps\veca) \hooksymp M$ is a symplectic embedding for some small $\eps > 0$.
Here $E(\veca) = \{ \pi\sum_{i=1}^n \tfrac{1}{a_i}|z_i|^2 \leq 1\} \subset \C^n$ denotes the closed symplectic ellipsoid with area factors $(a_1,\dots,a_n)$ and $\intE(\veca)$ denotes its interior.
Let $\wh{M}_\veca := M_\veca \cup (\R_{\leq 0} \times \bdy M_\veca)$ denote the symplectic completion of $M_\veca$, and let $J$ be an admissible almost complex structure on $\wh{M}_\veca$ in the sense of symplectic field theory (SFT).
Then $\Tcount_{M,A}^\veca$ is the count of index zero finite energy $J$-holomorphic planes $u: \C \ra \wh{M}_\veca$.
In general this is a virtual count taking rational values, although it is shown in \cite[\S2]{cusps_and_ellipsoids} that in many important cases it can be defined using classical pseudoholomorphic curve techniques and takes integer values. For instance, this is the case for $\Tcount_d^{p/q}$ whenever $p/q$ is a reduced fraction satisfying $p+q=3d$.
% (in fact here we must have $\Tcount_d^{p/q} \in \Z_{\geq 0}$ by a version of automatic transversality).

As illustrated above, a central question is to understand when $\Tcount_{M,A}^\veca$ is nonzero.
For example, we have:
\begin{conjecture}\label{conj:p_q_d}
 For any reduced fraction $p/q > 1$ satisfying $p+q = 3d$ and $(p-1)(q-1) \leq (d-1)(d-2)$, we have $\Tcount_{d}^{p/q} \neq 0$.
\end{conjecture}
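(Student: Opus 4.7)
The plan is to combine the tree formula (the main theorem of this paper, expressing $\Tcount_d^{p/q}$ as a weighted sum over trees) with an induction on the ``deficiency'' $\delta := (d-1)(d-2) - (p-1)(q-1) \geq 0$, with the base cases coming from the boundary $\delta = 0$. When $\delta = 0$ the relevant curves are unicuspidal rational plane curves with a single $(p,q)$ cusp; by Orevkov's classification these come from Markov triples and exist as honest algebraic curves, and combined with the identification in \cite[Thm. E]{cusps_and_ellipsoids} this already forces $\Tcount_d^{p/q} \neq 0$ along the entire boundary curve $(p-1)(q-1) = (d-1)(d-2)$. This supplies the base cases for the induction.

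For the inductive step I would exploit the tree formula itself as encoding a recursion. The combinatorial data of a tree is closely tied to a continued fraction expansion of $p/q$, and by ``cutting'' a tree along a distinguished edge one should decompose its contribution into contributions from strictly smaller counts $\Tcount_{d'}^{p'/q'}$, where $(p'/q', d')$ is a Farey-type predecessor with deficiency $\delta' < \delta$. Iterating this decomposition expresses $\Tcount_d^{p/q}$ in terms of boundary values, at which point one must verify that the resulting expression is nonzero. A parallel (and potentially cleaner) route is to realize the recursion symplectically, via an SFT-type neck-stretching or degeneration argument that relates a $(p,q)$-cuspidal count to cuspidal counts for a Farey neighbor together with a node-smoothing term.

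The main obstacle, and presumably the reason the conjecture is still open, is the sign cancellation intrinsic to the tree formula: $\Tcount_d^{p/q}$ is a \emph{signed} count, so exhibiting a single tree with nonzero weight is not enough. My first attempt would be to isolate a distinguished ``linear'' tree (for instance the one associated to the negative continued fraction of $p/q$) whose weight can be computed explicitly and shown to be positive, and then construct a sign-reversing involution on the remaining trees so that their contributions cancel in pairs. If a clean involution cannot be found, a fallback is to reinterpret the tree formula through a tropical correspondence theorem or a Caporaso--Harris-style recursion in which each summand is manifestly non-negative. Either way, the heart of the argument lies in taming the cancellations — a combinatorial obstruction that is likely the reason the nonvanishing behaviour is described as ``mysterious'' in the abstract.
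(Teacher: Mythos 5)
This statement is Conjecture~\ref{conj:p_q_d}, which the paper explicitly leaves open --- it is presented as motivation for the tree formula, not as a result, and the introduction stresses that the sign cancellations in \eqref{eq:main_tree_formula} are precisely what prevent such nonvanishing statements from being read off. Your submission is a research plan rather than a proof, and you acknowledge as much: every step of the inductive stage is phrased conditionally (``one should decompose,'' ``my first attempt would be,'' ``a fallback is''), and the central difficulty --- showing that the signed sum over trees does not cancel to zero --- is named but not resolved. Neither the sign-reversing involution nor the tropical/Caporaso--Harris reinterpretation is constructed, so there is no argument to check.

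Two concrete problems beyond the overall incompleteness. First, your base case is much thinner than you suggest: the condition $\delta = 0$, i.e. $(p-1)(q-1) = (d-1)(d-2)$ together with $p+q=3d$, forces $pq = d^2+1$, so $p,q = \tfrac{1}{2}\bigl(3d \pm \sqrt{5d^2-4}\bigr)$; integer solutions exist only when $d$ is an odd-index Fibonacci number. Most triples $(p,q,d)$ covered by the conjecture do not sit above any $\delta=0$ instance along a Farey chain, so the proposed induction has no anchor for generic $d$. Second, the tree formula \eqref{eq:main_tree_formula} is indexed by abstract rooted trees weighted by leaf numbers and the lattice path $\Gamma^a_\bullet$; nothing in the paper identifies individual trees with continued-fraction data of $p/q$, and ``cutting along a distinguished edge'' does not obviously produce the counts $\Tcount_{d'}^{p'/q'}$ for Farey predecessors --- the recursion \eqref{eq:wtTcount_intro} decomposes by degree partitions $d_1+\cdots+d_k=d$ at fixed $a$, not by varying $a$. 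If you want to pursue this, the honest starting point is the remark in the introduction that all $a$-dependence enters through $\Gamma^a_\bullet$, and the problem of controlling how the summands of \eqref{eq:main_tree_formula} change as $a$ crosses the rational value $p/q$.
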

\NI This is equivalent to the statement that there exists an index zero $(p,q)$-sesquicuspidal symplectic curve in $\CP^2$ of genus zero and degree $d$ if and only if it is allowed by the adjunction formula, which states here
 that the count of singularities excluding the $(p,q)$ cusp is $\tfrac{1}{2}(d-1)(d-2) - \tfrac{1}{2}(p-1)(q-1)$.
It is known that an affirmative answer would in particular imply optimality of Hind's folding embedding $E(\mu,\mu a) \times \C^N \hooksymp \CP^2 \times \C^N$ with $\mu = \tfrac{a+1}{3a}$ for all $N \in \Z_{\geq 1}$ and $a > \tau^4$, where $\tau = \tfrac{1+\sqrt{5}}{2}$ is the golden ratio (see \cite{hind2015some}).
Another closely related question concerns the behavior of $\Tcount_d^a$ as a function of $a$:
\begin{conjecture}\label{conj:nondec_in_a}
  The count $\Tcount_{d}^a \in \R$ is nondecreasing as a function of $a \in \R_{>1}$. 
\end{conjecture}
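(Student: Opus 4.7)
The plan is to combine the tree formula of this paper with an SFT wall-crossing analysis. The first step is to observe that $\Tcount_d^a$ is locally constant on $\R_{>1} \setm \Q$: as $a$ varies through irrational values, the Reeb spectrum of $\partial E(\eps, \eps a)$ deforms continuously without collisions or degenerations, so an interpolating one-parameter family of admissible almost complex structures yields a cobordism of moduli spaces preserving the index-zero plane count. Consequently the conjecture reduces to showing that for each reduced fraction $p/q > 1$ the jump
\[
\Delta(p/q) := \Tcount_d^{(p/q)^+} - \Tcount_d^{(p/q)^-}
\]
is nonnegative.

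To analyze $\Delta(p/q)$, I would fix $a^- < p/q < a^+$ both irrational and close to $p/q$, use that $\Tcount_d^a$ is independent of the auxiliary size parameter $\eps$ to arrange $E(\eps^-, \eps^- a^-) \subsetneq E(\eps^+, \eps^+ a^+)$ with disjoint boundaries, and neck-stretch along $\partial E(\eps^+, \eps^+ a^+) \subset M_{(1,a^-)}$. SFT compactness then presents $\Tcount_d^{a^-}$ as a weighted sum over holomorphic buildings whose top level lies in $\wh{M}_{(1,a^+)}$ (generically a single plane, contributing $\Tcount_d^{a^+}$) and whose bottom level lies in the completed shell between the two ellipsoids. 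Performing the analogous stretch starting from $a^+$ yields a symmetric expansion of $\Tcount_d^{a^+}$. Matching each expansion term-by-term with the tree formula of this paper should then express $\Delta(p/q)$ as a finite combinatorial sum indexed by those trees whose wall-crossing vertex carries an asymptotic datum $(p,q)$ that becomes admissible only once $a$ crosses $p/q$.

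The principal obstacle is positivity of $\Delta(p/q)$. Individual vertex weights in the tree formula may be negative rationals that cancel in subtle ways to yield the positive integer $\Tcount_d^a$, so nonnegativity of the jump cannot be read off directly. A natural strategy is to organize the sum using the Farey/Stern--Brocot structure of the rationals: each $p/q$ has a unique pair of Farey neighbors $p'/q'$ and $p''/q''$ with $p = p'+p''$ and $q = q'+q''$, and one would hope that trees contributing at the wall $p/q$ decompose as grafts of trees for $p'/q'$ and $p''/q''$ in such a way that positivity reduces, by induction on depth in the Stern--Brocot tree, to a finite combinatorial identity at the grafting vertex. A secondary concern is that for $p + q \neq 3d$ the count is virtual, so one must either restrict attention to the classically defined range of \cite{cusps_and_ellipsoids} or implement the wall-crossing within a polyfold or Kuranishi framework. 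Any successful execution would combine with Conjecture~\ref{conj:p_q_d} to yield sharp nonvanishing results, including optimality of Hind's folding obstruction across a wider range of parameters.
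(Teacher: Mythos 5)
The statement you are addressing is Conjecture~\ref{conj:nondec_in_a}, which the paper leaves \emph{open}: there is no proof in the paper to compare against, and your proposal does not close the gap either. What you have written is a research outline whose decisive step is explicitly deferred. After reducing the conjecture to nonnegativity of the jump $\Delta(p/q) = \Tcount_d^{(p/q)^+} - \Tcount_d^{(p/q)^-}$, you acknowledge that ``the principal obstacle is positivity of $\Delta(p/q)$'' and offer only the hope that a Farey/Stern--Brocot grafting induction would establish it. That is exactly the difficulty the paper identifies as the motivation for seeking a positive combinatorial formula: both the recursion \eqref{eq:wtTcount_intro} and the tree formula \eqref{eq:main_tree_formula} contain terms of both signs (the factor $(-1)^{|\Vin(T) \setminus \Vmov(T)|}$, and for $1<a<2$ the nonpositive factors $\tfrac{1}{\leaf(v)!}-1$), and no mechanism is currently known that forces the cancellations to produce a nonnegative difference across a wall. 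Until that combinatorial identity is actually formulated and proved, the argument is incomplete at its central point.

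Two secondary remarks. First, the reduction to wall-crossings at rationals does not need your SFT cobordism argument: all dependence on $a$ in \eqref{eq:main_tree_formula} is through the lattice path $\Ga^a_0,\Ga^a_1,\dots,\Ga^a_{3d-1}$, which for fixed $d$ changes only at finitely many rationals $p/q$ with $p+q \leq 3d$, so local constancy of $\Tcount_d^a$ on the complement of those walls is immediate from the formula itself. Second, your proposed geometric expansion of $\Delta(p/q)$ by neck-stretching between nested ellipsoids runs into the issue you yourself flag: away from $p+q=3d$ the counts are virtual, so the term-by-term matching would require a virtual perturbation framework; note that Theorem~\ref{thm:punc_desc} already packages the relevant stretching algebraically, which suggests the purely combinatorial route through \eqref{eq:main_tree_formula} is the more tractable one. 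In short: the framing is sensible and consistent with the paper's point of view, but the conjecture is not proved.
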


The recent article \cite{SDEP} gives a recursive formula for $\Tcount_{M,A}^\veca$, which takes the following form in the case $M = \CP^2$.
Firstly, we associate to each $a \in \R_{>0}$ a unit step lattice path $\Ga_0^a,\Ga_1^a,\Ga_2^a,\dots \in \Z_{\geq 0}^2$. Explicitly, for $k \in \Z_{\geq 0}$ and $a$ irrational, $\Ga^a_k$ is the pair $(i,j) \in \Z_{\geq 0}^2$ which minimizes $\max \{i,aj\}$ subject to $i+j = k$.
For example, in the case $a = \tfrac{3}{2}+\delta$ with $\delta > 0$ sufficiently small the first few terms are $(0,0),(1,0),(1,1),(2,1),(3,1),(3,2),(4,2),(4,3)$, and so on (see \cite[Fig. 1]{SDEP} for an illustration).

\begin{thm}[\cite{SDEP}]\label{thm:rec}
For any $a \in \R_{> 0}$ and $d \in \Z_{\geq 1}$ we have:
\begin{align}\label{eq:wtTcount_intro}
\wtTcount_d^a = \left(\Ga^a_{3d-1}\right)!\left( (d!)^{-3} - \sum_{\substack{k \geq 2\\d_1,\dots,d_k \in \Z_{\geq 1}\\d_1 + \cdots + d_k = d}}  \frac{\wtTcount_{d_1}^a \cdot \cdots \cdot \wtTcount_{d_k}^a}{k!(\sum_{s=1}^k \Ga^a_{3d_i-1} )!}\right).
\end{align} 
\end{thm}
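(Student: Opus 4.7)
The plan is to apply SFT compactness to a single moduli space of rational curves in $\CP^2$ carrying a high-order tangency constraint at a point, and to identify two expressions for its virtual count --- one classical and one in terms of the $\wtTcount_\bullet^a$.

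\textbf{Classical anchor.} First I would invoke the classical descendant Gromov--Witten identity on $\CP^2$: the count of genus zero degree $d$ rational maps sending a marked point to a fixed generic point with contact order $3d-1$ to a local divisor equals $(3d-1)!/(d!)^3$. Reinterpreted in the language of SFT, this is the count of index zero $J$-holomorphic planes in the symplectic completion of $\CP^2 \setm B^4(\delta)$ positively asymptotic to the longest Reeb orbit on $\bdy B^4(\delta)$. The normalization factor $(\Ga^a_{3d-1})!$ on the left-hand side of \eqref{eq:wtTcount_intro} will enter naturally from asymptotic marker data at a puncture asymptotic to the orbit $\orb_{3d-1}$ on the ellipsoid boundary.

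\textbf{Neck stretching.} Next I would deform the small round ball $B^4(\delta) \subset \CP^2$ to the ellipsoid $\iota(E(\eps\veca))$ and stretch the neck across the interface. By SFT compactness, each index zero plane in the starting moduli limits to a building whose lowest level is a disjoint union of $k \geq 1$ index zero planes in $\wh{M}_\veca$ with degrees $d_1 + \cdots + d_k = d$, each contributing $\wtTcount_{d_i}^a$, and whose top level is a rational curve in the intermediate symplectization matching the bottom asymptotics. Summing over $k$: the $k=1$ contribution produces $\wtTcount_d^a/(\Ga^a_{3d-1})!$; each $k \geq 2$ configuration produces a term in the sum in \eqref{eq:wtTcount_intro}, with the factor $1/k!$ arising from unordered labeling of the bottom components and $1/(\sum_i \Ga^a_{3d_i-1})!$ from the joint asymptotic marker bookkeeping required to match the top-level configuration.

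\textbf{Main obstacle.} The key technical step, and the one I expect to be hardest, is to show that for each tuple of orbits $(\orb_{3d_1-1},\ldots,\orb_{3d_k-1})$ the top-level contribution in the symplectization/cobordism is rigid and counts exactly $1$, so that the combinatorial weight on the bottom planes reduces to precisely $1/(k!(\sum_i \Ga^a_{3d_i-1})!)$. This requires careful treatment of SFT buildings with multiply covered Reeb asymptotics, a virtual perturbation scheme compatible with the asymptotic marker framework, and an explicit combinatorial identification of how markers on top and bottom are paired. Once this rigidity and combinatorics is in place, equating the SFT limit count with the classical value $(3d-1)!/(d!)^3$ and solving for $\wtTcount_d^a$ yields \eqref{eq:wtTcount_intro}.
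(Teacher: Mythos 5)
Your overall strategy --- anchor to a closed-curve descendant invariant of $\CP^2$, stretch the neck along $\bdy E(\eps\veca)$, and match the resulting buildings against products of the $\wtTcount_{d_i}^a$ --- is exactly the route taken in \cite{SDEP}, where this theorem is actually proved (the present paper only quotes it; \S\ref{sec:tree_formula} recalls the two inputs, Theorem~\ref{thm:punc_desc} and \eqref{eq:CP2_stat_des}, from which \eqref{eq:wtTcount_intro} follows by projecting the coalgebra identity \eqref{eq:stretch_des} to word length one and isolating the $k=1$ term). So the shape of your argument is right, but two points need repair.

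First, your ``classical anchor'' is the wrong invariant. The constant term $(d!)^{-3}$ in \eqref{eq:wtTcount_intro} is the stationary gravitational descendant $N_{\CP^2,d[L]}\lll \psi^{3d-2}\pt\rrr$, not the count of rational curves with contact order $3d-1$ to a local divisor at a point. These genuinely differ: as noted after \eqref{eq:CP2_stat_des}, the descendant receives extra contributions from boundary strata with ghost components. Concretely, for $d=2$ the order-$5$ tangency count is $1$, whereas your claimed value $(3d-1)!\,(d!)^{-3}=15$; running the stretching argument from the tangency count would therefore produce an incorrect recursion. The reason one stretches the descendant rather than the tangency count is precisely that its ellipsoid-side pieces admit a closed-form evaluation. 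Second, the step you flag as the ``main obstacle'' is not a rigidity-plus-marker-bookkeeping statement but the actual content of the theorem: one must show that the punctured stationary descendant count in $\wh{E}(\veca)$ with positive asymptotics $\orb^\veca_{i_1},\dots,\orb^\veca_{i_k}$ equals $1/(\Ga^\veca_{i_1}+\cdots+\Ga^\veca_{i_k})!$, which is \eqref{eq:punc_des_comp}; it is not true that each such configuration ``counts exactly $1$'', and the value is a nontrivial rational number computed in \cite{SDEP}. Granting that input, your remaining bookkeeping (the $1/k!$ from unordered bottom components and the splitting $d=d_1+\cdots+d_k$) does reproduce \eqref{eq:wtTcount_intro}, so the gap is concentrated in the misidentified anchor and in that one deferred computation.
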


Here we put $\wtTcount_d^a := \mult_a(\Ga^a_k)\cdot \Tcount_d^a$, where $\mult_a(i,j) = i$ if $i > aj$ and $\mult_a(i,j) = j$ otherwise, and we write \eqref{eq:wtTcount_intro} using $\wt\Tcount_d^a$ instead of $\Tcount_d^a$ as convenience which yields a slightly simpler formula.
We add pairs in $\Z_{\geq 0}^2$ componentwise in the usual fashion, and we define the factorial of a pair $(i,j)$ by $(i,j)! := i!j!$.
Note that all dependence on $a$ in \eqref{eq:wtTcount_intro} is via the lattice path $\Ga_0^a,\Ga_1^a,\Ga_2^a,\dots \in \Z_{\geq 0}^2$.
The term $(d!)^{-3}$ arises from the computation of degree zero genus zero stationary gravitational descendant Gromov--Witten invariants of $\CP^2$ (see \cite[\S5.3.2]{SDEP} and \eqref{eq:CP2_stat_des} below).

Although Theorem~\ref{thm:rec} and its generalization makes it possible to compute $\Tcount_d^a$ for any fixed $d \in \Z_{\geq 1}$ and $a \in \R_{>0}$ given enough computational power, its recursivity somewhat obscures its enumerative essence, and the presence of terms of both positive and negative sign complicates efforts to study e.g. Conjecture~\ref{conj:p_q_d} or Conjecture~\ref{conj:nondec_in_a}.
Even more basically, although it is known by geometric arguments \cite{Mint,McDuffSiegel_counting} that we have $\Tcount_d^\infty > 0$ for all $d \in \Z_{\geq 1}$\footnote{Here we put $\Tcount_d^\infty := \Tcount_d^a$ for $a \gg d$. This corresponds to the count from \cite{McDuffSiegel_counting} of degree $d$ rational plane curves satisfying an order $3d-1$ local tangency constraint, which essentially amounts to fixing the $(3d-2)$-jet at a point.}, this is not a priori clear from Theorem~\ref{thm:rec}.\footnote{A fortiori, it follows by the obstruction bundle gluing method of \cite{Mint} that $\Tcount_d^\infty$ is nondecreasing as a function of $d \in \Z_{\geq 1}$.}
Similarly, it follows by automatic transversality and the results in \cite[\S2]{cusps_and_ellipsoids} that $\Tcount_d^{p/q}$ is a nonnegative integer whenever $p/q > 1$ is a reduced fraction with $p+q = 3d$, but this is not obvious from \eqref{eq:wtTcount_intro} due to the presence of denominators.

\sss

The above considerations motivate the search for a positive combinatorial formula for $\Tcount_d^a$.
In this note we take a step in this direction by establishing a closed formula for $\Tcount_d^a$ as a sum over trees with $d$ leaves and certain explicit, combinatorially defined weights.
Our formula still has some negative terms which we are not able to avoid, but they enter in a fairly transparent way, which could open further avenues for studying $\Tcount_{M,A}^\veca$ via combinatorics.

Before stating the formula, we will need some graph theoretic terminology.
\begin{definition}
For $k \in \Z_{\geq 1}$, let $\treesun_k$ denote the set of (isomorphism classes of) rooted trees with $k$ {\em unordered} leaves and no bivalent vertices (i.e. no vertices with $|v| = 2$).   
\end{definition}
\NI In other words, a tree $T \in \treesun_k$ has a distinguished root vertex and $k$ leaf vertices, and the remaining vertices are called {\bf internal}.
We denote the set of leaf vertices by $\Vleaf(T)$ and the set of internal vertices by $\Vin(T)$.
See Figure~\ref{fig:T_4_un} for a picture of $\treesun_4$.

We orient all edges of $T$ towards the root, and we will say that $v$ is ``above'' $w$ if $w$ lies on the oriented path from $v$ to the root.
\begin{definition}
  For a tree $T$ in $\treesun_k$, an internal vertex $v \in \Vin(T)$ is {\bf movable} if there are no internal vertices above it.  We denote the set of movable vertices of $T$ by $\Vmov(T) \subset \Vin(T)$. 
\end{definition}

\begin{definition}
For a vertex $v$ of a tree $T$ in  $\treesun_k$, the {\bf leaf number} $\leaf(v)$ is the number of leaf vertices lying above $v$, including possibly $v$ itself (i.e. $\leaf(v) = 1$ if $v$ is a leaf vertex).
\end{definition}

\begin{figure}  
  \includegraphics[scale=1]{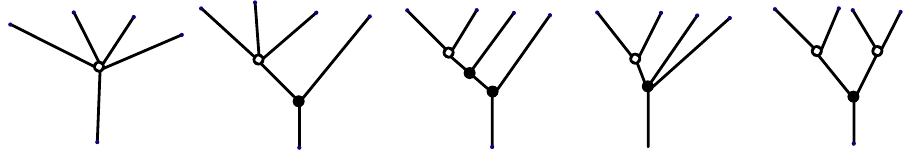}
  \caption{The trees comprising $\treesun_4$. The top four vertices are the leaves and the bottom vertex is the root. The internal vertices are denoted with a large circle, which is open for a movable vertex and solid otherwise.
  The values of $|\aut(T)|$ are $24,6,2,4,8$ respectively.}
  \label{fig:T_4_un}
\end{figure}

We are now ready to state our main result:
\begin{thmlet}\label{thmlet:main_tree}
For any $d \in \Z_{\geq 1}$, we have:
\begin{align}\label{eq:main_tree_formula}
\wtTcount_d^a = (\Ga^a_2!)^d\sum_{T \in \treesun_d}\frac{(-1)^{|\Vin(T) \setm \Vmov(T)|}}{|\aut(T)|} \cdot \prod\limits_{v \in \Vin(T)}\left(\frac{\Ga^a_{3\leaf(v)-1}!}{\left(\sum\limits_{v' \ra v} \Ga^a_{3\leaf(v')-1}\right)!}\right)\cdot \prod\limits_{v \in \Vmov(T)} \left((\leaf(v)!)^{-2} \frac{ \left( \leaf(v)\Ga^a_{2} \right)!}{(\Ga^a_2!)^{\leaf(v)}}-1\right).
\end{align}
\end{thmlet}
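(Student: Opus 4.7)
The plan is induction on $d$, with Theorem~\ref{thm:rec} as the engine. For the base case $d=1$, both sides of \eqref{eq:main_tree_formula} equal $\Ga^a_2!$: Theorem~\ref{thm:rec} gives $\wtTcount_1^a = \Ga^a_2!$ (the sum is empty since $k\geq 2$), while $\treesun_1$ contains a unique tree (root joined to a single leaf, with no internal vertices), contributing $(\Ga^a_2!)^1 \cdot 1$ to the tree-sum.

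For the inductive step, fix $d \geq 2$ and assume \eqref{eq:main_tree_formula} for all $d' < d$. Each $T \in \treesun_d$ has a distinguished \textbf{bottom internal vertex} $v_0$ adjacent to the root; the no-bivalent hypothesis forces $v_0$ to have $k \geq 2$ children. Cutting $T$ at $v_0$ decomposes it into a multiset $\{T_1,\dots,T_k\}$ with $T_i \in \treesun_{d_i}$ and $d_1+\cdots+d_k = d$, and this operation is a bijection. A standard automorphism-factorization and orbit-counting argument (based on $|\aut(T)| = (\prod_j m_j!) \cdot \prod_i |\aut(T_i)|$, where $m_j$ is the multiplicity of each subtree isomorphism type among the children of $v_0$) yields the identity
\[
\sum_{\substack{k \geq 2 \\ d_1+\cdots+d_k=d}} \frac{1}{k!} \sum_{(T_1,\dots,T_k) \in \prod_i \treesun_{d_i}} \frac{X(T_1,\dots,T_k)}{\prod_i |\aut(T_i)|} \;=\; \sum_{T \in \treesun_d} \frac{X(T)}{|\aut(T)|},
\]
for any symmetric $X$. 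Substituting the inductive tree formula for each $\wtTcount_{d_i}^a$ into Theorem~\ref{thm:rec} and applying this identity converts the recursion into a sum over $T \in \treesun_d$. The prefactor $\frac{\Ga^a_{3d-1}!}{(\sum_s \Ga^a_{3d_s-1})!}$ from the recursion combines with the inductive $\Vin(T_i)$-products to produce the full product $\prod_{v \in \Vin(T)}\frac{\Ga^a_{3\leaf(v)-1}!}{(\sum_{v' \ra v}\Ga^a_{3\leaf(v')-1})!}$ of the tree formula, the new factor being precisely the one attached to $v_0$.

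The hard part is matching the movable-vertex products and the signs, which splits into two cases. When $v_0 \notin \Vmov(T)$ (equivalently, some $d_j \geq 2$), one has $\Vmov(T) = \bigsqcup_i \Vmov(T_i)$, so the movable-factor products from the subtrees combine directly; the sign identity $(-1)^{|\Vin(T) \setm \Vmov(T)|} = -\prod_i (-1)^{|\Vin(T_i) \setm \Vmov(T_i)|}$ absorbs the $-$ in front of the recursion sum. The subtle case is the unique \emph{star tree} $T_\star \in \treesun_d$ in which $v_0$ has $d$ leaf children (all $d_i = 1$), which is the only tree with $v_0 \in \Vmov(T)$. For $T_\star$ the special factor $F(v_0) = (d!)^{-2}\frac{(d\Ga^a_2)!}{(\Ga^a_2!)^d} - 1$ plays a dual role: after collecting the other weights, its first summand collapses to $\Ga^a_{3d-1}!(d!)^{-3}$, matching the stationary descendant term in Theorem~\ref{thm:rec}, while its $-1$ summand accounts for the $k=d,\, d_1=\cdots=d_d=1$ term of the recursion sum (using $\wtTcount_1^a = \Ga^a_2!$). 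This splitting of the movable-vertex factor as $A_{v_0} - 1$ is the key combinatorial mechanism that translates the recursion into tree language; all remaining verifications are routine bookkeeping.
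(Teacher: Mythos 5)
Your proposal is correct, but it proves the theorem by a genuinely different route than the paper. You take the recursion of Theorem~\ref{thm:rec} as a black box and verify the tree formula by induction on $d$, using the decomposition of a tree $T \in \treesun_d$ into the multiset of subtrees hanging off the unique internal vertex $v_0$ adjacent to the root. I checked the key points: the orbit-counting identity converting ordered tuples of subtrees weighted by $\tfrac{1}{k!\prod_i |\aut(T_i)|}$ into a sum over $\treesun_d$ weighted by $\tfrac{1}{|\aut(T)|}$ is valid (it uses $|\aut(T)| = \bigl(\prod_j m_j!\bigr)\prod_i|\aut(T_i)|$, and relies on the convention, implicit in Figure~\ref{fig:T_4_un} and the $\treesun_2, \treesun_3$ examples, that the root has valency one, so $v_0$ exists and has $k \geq 2$ children for $d \geq 2$); the sign bookkeeping $(-1)^{|\Vin(T)\setm\Vmov(T)|} = -\prod_i(-1)^{|\Vin(T_i)\setm\Vmov(T_i)|}$ for non-star trees absorbs the overall minus sign; and for the star tree the factor $(d!)^{-2}\tfrac{(d\Ga^a_2)!}{(\Ga^a_2!)^d}-1$ does split exactly into the descendant term $\Ga^a_{3d-1}!(d!)^{-3}$ plus the $k=d$, all-$d_i=1$ term of the recursion (using $\wtTcount_1^a = \Ga^a_2!$). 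The paper instead never runs the recursion: it inverts the $\Li$ homomorphism $\aug_a$ via the homological perturbation formula of Proposition~\ref{prop:inverting_L_inf}, obtaining directly a sum over trees in $\treesor_k$ for all $k \leq d$, and then reindexes via a bijection $\zeta$ between pairs $(T,\calS)$ with $T\in\treesor_d$, $\calS\subset\Vmov(T)$ and labeled trees with fewer leaves; the movable-vertex factor $A_{v_0}-1$ arises there from summing $(-1)^{|\calS|}$ over subsets $\calS$, rather than from matching against the recursion. Your argument is more elementary and self-contained given Theorem~\ref{thm:rec}, but it is a verification that presupposes the answer; the paper's derivation explains where the trees and the $A_v - 1$ factors come from, which is the structural content the note is after. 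Both are complete proofs.
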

\NI Here the sum $\sum\limits_{v' \ra v}$ is over all vertices $v'$ which are adjacent to $v$ and lie strictly above it.
\begin{rmk}
Note that we have $\Ga^a_2 = 1$ for $1 < a < 2$ and $\Ga^a_2 = 2$ for $a \geq 2$.
Therefore the last parenthesized term in \eqref{eq:main_tree_formula} is 
$\frac{1}{\leaf(v)!}-1$ if $1 < a < 2$ and $2^{-\leaf(v)}\binom{2\leaf(v)}{\leaf(v)}-1$ if $a \geq 2$.
In the latter case one can check that the term $2^{-\leaf(v)}\binom{2\leaf(v)}{\leaf(v)}-1$ is always positive, so a summand in \eqref{eq:main_tree_formula} is negative if and only if there are an odd number of ``unmovable'' vertices $v \in \Vin(T) \setm \Vmov(T)$.
\end{rmk}

In the special case $a \gg 1$, we have $\Ga^\infty_k = (k,0)$ for all $k \in \Z_{\geq 0}$, so we get the following formula for $\wtTcount_d^\infty = (3d-1)\Tcount_d^\infty$:
\begin{corlet}\label{corlet:wtT_d_rec}
For $d \in \Z_{\geq 1}$ we have:
\begin{align}\label{eq:wtT}
\wtTcount_d^\infty = 2^d\sum_{T \in \treesun_d}\frac{(-1)^{|\Vin(T) \setm \Vmov(T)|}}{|\aut(T)|} \cdot \prod\limits_{v \in \Vin(T)}\left(\frac{(3\leaf(v)-1)!}{\left(3\ell(v) - |v| +1\right)!}\right)\cdot \prod\limits_{v \in \Vmov(T)} \left(2^{-\leaf(v)}\binom{2\leaf(v)}{\leaf(v)}-1\right).
\end{align}
\end{corlet}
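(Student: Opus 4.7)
The plan is to derive Corollary~\ref{corlet:wtT_d_rec} by specializing Theorem~\ref{thmlet:main_tree} to the regime $a \gg 1$. Since $\wtTcount_d^\infty$ is defined as $\wtTcount_d^a$ for any $a$ sufficiently large relative to $d$, it suffices to evaluate the right hand side of \eqref{eq:main_tree_formula} at some fixed $a > 3d-1$ and check that every factor reduces to the corresponding factor in \eqref{eq:wtT}.

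The first step is to pin down the lattice path in this regime. For any $k \leq 3d-1$ and $a > 3d-1$, the pair $(i,j) \in \Z_{\geq 0}^2$ with $i+j = k$ minimizing $\max\{i, aj\}$ must satisfy $j = 0$, since any $j \geq 1$ would force $\max\{i,aj\} \geq a > k$. Hence $\Ga^a_k = (k,0)$ for all $k \leq 3d-1$, so $\Ga^a_k! = k!$ and in particular $\Ga^a_2! = 2$. This immediately accounts for the prefactor $(\Ga^a_2!)^d = 2^d$. On the left hand side, $\mult_a(\Ga^a_{3d-1}) = \mult_a(3d-1,0) = 3d-1$, yielding $\wtTcount_d^\infty = (3d-1)\,\Tcount_d^\infty$.

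Next I would substitute these values into the two products over vertices. Each pair $\Ga^a_{3\leaf(v')-1}$ appearing in $\sum_{v' \to v}\Ga^a_{3\leaf(v')-1}$ has second coordinate zero, so the vector sum collapses to $\big(\sum_{v' \to v}(3\leaf(v')-1),\,0\big)$ and its factorial is the ordinary scalar factorial. Using that the children of $v$ partition the leaves above $v$, we have $\sum_{v' \to v}\leaf(v') = \leaf(v)$, hence $\sum_{v'\to v}(3\leaf(v')-1) = 3\leaf(v) - c(v)$, where $c(v)$ denotes the number of children of $v$. Under the paper's convention that every internal vertex, including the root, has valence $|v| = c(v)+1$ (consistent with the ``no bivalent vertex'' hypothesis and with the uniform formula in the corollary), this becomes $3\leaf(v)-|v|+1$. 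For the movable-vertex factor, substituting $\Ga^a_2 = (2,0)$ gives
\[
(\leaf(v)!)^{-2}\,\frac{(2\leaf(v))!}{2^{\leaf(v)}} - 1 \;=\; 2^{-\leaf(v)}\binom{2\leaf(v)}{\leaf(v)} - 1.
\]

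Assembling the three simplified factors reproduces \eqref{eq:wtT} term by term. The derivation is essentially pure substitution, so no serious obstacle is anticipated; the only point requiring care is the valence convention for the root vertex, which must be fixed so that the expression $3\leaf(v) - |v| + 1$ holds uniformly for all internal vertices.
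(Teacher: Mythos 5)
Your proposal is correct and matches the paper's own (implicit) derivation: Corollary~\ref{corlet:wtT_d_rec} is obtained by substituting $\Ga^\infty_k = (k,0)$ into Theorem~\ref{thmlet:main_tree}, exactly as you do. The one caveat you raise about the root is moot, since in the paper's conventions the root vertex is \emph{not} an internal vertex, so every $v \in \Vin(T)$ has exactly one outgoing edge and $|v| = (\text{number of children}) + 1$ holds uniformly.
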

\NI Here $|v|$ denotes the valency (a.k.a. degree) of $v$. Equivalently, this is the number of incoming edges plus $1$, where 
for $v \in \Vin(T)$ the {\bf incoming} (resp. {\bf outgoing}) edges of $v$ are those edges of $T$ which have $v$ as an endpoint and are oriented towards (resp. away from) $v$.

\begin{example}
In the case $d=1$, there is just a single tree in $\treesun_1$, which has no internal vertices. Both products in \eqref{eq:wtT} are vacuous, so we get $\wtTcount_1^\infty = 2$ and thus $\Tcount_1^\infty = \tfrac{1}{2}\wtTcount_1^\infty = 1$.
\end{example}

\begin{example}
In the case $d=2$, there is also just one tree in $\treesun_2$, which has one internal vertex, and that vertex is movable with leaf number is $2$.
Then \eqref{eq:wtT} gives
\begin{align*}
\wtTcount_2^\infty = 2^2 \cdot \frac{(-1)^0}{2}\cdot \frac{5!}{4!} \cdot \left(2^{-2}\binom{4}{2} -1\right) = 5,
\end{align*}
and thus $\Tcount_2^\infty = \tfrac{1}{5}\wtTcount_2^\infty = 1$.
\end{example}

\begin{example}
To compute $\wtTcount_3^\infty$, put $\treesun_3 = \{T_1,T_2\}$, where $T_1$ has a single internal vertex of valency $4$ and $T_2$ has two internal vertices each of valency $3$.
We have $|\aut(T_1)| = 3!$ and $|\aut(T_2)| = 2!$. 
The internal vertex of $T_1$ is movable and has leaf number $3$.
The internal vertices of $T_2$ have leaf numbers $2,3$, and the first is movable while the second is not.
Plugging these into \eqref{eq:wtT} gives 
\begin{align*}
\wtTcount_3^\infty &= 2^3\left(\frac{1}{3!}\frac{8!}{6!}\left[2^{-3}\binom{6}{3}-1\right] - \frac{1}{2!}\frac{5!}{4!}\frac{8!}{7!}\left[2^{-2}\binom{4}{2}-1\right]\right)\\
&= 2^3(14-10) = 32, 
\end{align*}
and hence
$\Tcount_3^\infty = \tfrac{1}{8}\wtTcount_3^\infty = 4$.
\end{example}

  \begin{rmk}
 Let $\treesor_d$ be defined just like $\treesun_d$, except with {\em ordered} leaves (see Definition~\ref{def:treesor}).
 Curiously, we have precisely $\Tcount_d^\infty = |\treesor_d|$ for $d = 1,2,3,4$, while experimentally we have $\Tcount_d^\infty < |\treesor_d|$ for $d \geq 5$.
 This perhaps suggests that $\Tcount_d^\infty$ counts elements of $\treesor_d$ with some additional conditions which only become relevant when there are at least $5$ leaves.
\end{rmk}

The main ingredients in the proof of Theorem~\ref{thmlet:main_tree} are (i) the computation from \cite{SDEP} of genus zero punctured stationary descendants in symplectic ellipsoids (see Theorem~\ref{thm:punc_desc} below), and (ii) homological perturbation theory for $\Li$ algebras.
First, in \S\ref{sec:HPT} we recall some relevant formalism for $\Li$ algebras and state an explicit formula for inverting $\Li$ homomorphisms between evenly graded $\Li$ algebras. Then, in \S\ref{sec:tree_formula} we combine this with the computation for punctured stationary descendants in ellipsoids to give a closed formula for $\Tcount_d^a$, which after some manipulations yields \eqref{eq:main_tree_formula}.

\acksection*{Acknowledgements}
{
This note is an offshoot of the joint project \cite{SDEP} with Grisha Mikhalkin, to whom I am grateful for many helpful discussions.
}

\section{Homological perturbation theory}\label{sec:HPT}

$\Li$ algebras provide a useful framework for organizing the curve counts underlying the ellipsoidal superpotential.
Here we will give a minimal treatment, referring the reader to \cite[\S3]{SDEP} and the references therein for more details.
An $\Li$ algebra over $\Q$ by definition consists of a $\Z$-graded rational vector space $V$ along multilinear $k$-to-$1$ maps $\ell^k$ for $k \in \Z_{\geq 1}$ which are symmetric (in a suitable graded sense) and satisfy an infinite sequence of quadratic relations.
As it happens, in this note we will only need to consider evenly graded $\Li$ algebras over $\Q$, which are simply evenly graded rational vector spaces.
Indeed, if $V$ is only supported in even degrees then all of the $\Li$ operations $\ell^1,\ell^2,\ell^3,\dots$ necessarily vanish for degree parity reasons, and moreover all of the Koszul-type signs conveniently disappear.

Given an evenly graded $\Li$ algebra $V$, its (reduced) symmetric tensor coalgebra is 
$\ovl{S}V = \oplus_{k=1}^\infty \odot^kV$, where $\odot^kV := (\underbrace{V \otimes \cdots \otimes V}_k)/\Sigma_k$ is the $k$-fold symmetric tensor power of $V$.
Here $\Sigma_k$ is the permutation group on $k$ elements, acting on $\odot^kV$ in the obvious way (without signs).
The coproduct on $\Delta_{\ovl{S}V}: \ovl{S}V \ra \ovl{S}V \otimes \ovl{S}V$ is given by
\begin{align*}
\Delta_{\ovl{S}V}(v_1 \odot \cdots \odot v_k) = \sum_{i=1}^{k-1} \sum_{\sigma \in \shuf(i,k-i)} (v_{\sigma(1)} \odot \cdots \odot v_{\sigma(i)}) \otimes (v_{\sigma(i+1)} \odot \dots \odot v_{\sigma(k)}),
\end{align*}
where $\shuf(i,k-i)$ is the subset of permutations $\sigma \in \Sigma_k$ satisfying $\sigma(1) < \cdots < \sigma(i)$ and $\sigma(i+1) < \cdots < \sigma(k)$.

Given evenly graded $\Li$ algebras $V,W$ over $\Q$, an $\Li$ homomorphism $\Phi: V \ra W$ is simply a sequence of degree zero\footnote{Note that some references use different grading conventions.} linear maps $\Phi^k: \odot^k V \ra W$ for $k \in \Z_{\geq 1}$, i.e. each $\Phi^k$ is a multilinear map with $k$ symmetric inputs in $V$ and one output in $W$.
The maps $\Phi^1,\Phi^2,\Phi^3,\dots$ can be uniquely assembled into a degree zero coalgebra map $\wh{\Phi}: \ovl{S}V \ra \ovl{S}W$, i.e. 
a linear map satisfying 
\begin{align*}
(\wh{\Phi} \otimes \wh{\Phi})\circ \Delta_{\ovl{S}V} = \Delta_{\ovl{S}W}\circ \wh{\Phi},
\end{align*}
where
\begin{align*}
\wh{\Phi}(v_1,\dots,v_k) = \sum_{\substack{s \geq 1\\1 \leq k_1 \leq \cdots \leq k_s \\ k_1 + \cdots + k_s = k}}\sum_{\sigma \in \shufbar(k_1,\dots,k_s)} (\Phi^{k_1} \odot ... \odot \Phi^{k_s})(v_{\sigma(1)}\odot ... \odot v_{\sigma(k)}).
\end{align*}
Conversely, given such a coalgebra map $\wh{\Phi}$ we recover $\Phi^1,\Phi^2,\Phi^3,\dots$ via
the compositions 
\[\begin{tikzcd}
\odot^k V \;\;\subset\;\; \ovl{S}V & \ovl{S}W & W
\arrow["\wh{\Phi}",from=1-1,to=1-2] 
\arrow["\pi_1",from=1-2,to=1-3],
\end{tikzcd}\]
where $\pi_1: \ovl{S}W \ra W$ is the projection to tensors of word length one.

If $V_1,V_2,V_3$ are evenly graded $\Li$ algebras over $\Q$ and we have $\Li$ homomorphisms $\Phi: V_1 \ra V_2$ and $\Psi: V_2 \ra V_3$, then the composition $\Li$ homomorphism $\Psi \circ \Phi: V_1 \ra V_3$ is characterized by $\wh{\Psi \circ \Phi} = \wh{\Psi} \circ \wh{\Phi}$.
In terms of the operations $\Phi^1,\Phi^2,\Phi^3,\dots$ and $\Psi^1,\Psi^2,\Psi^3,\dots$ this translates into $(\Psi \circ \Phi)^1(v_1) = (\Psi^1 \circ \Phi^1)(v_1)$, $(\Psi \circ \Phi)^2(v_1,v_2) = \Psi^1(\Phi^2(v_1,v_2)) + \Psi^2(\Phi^1(v_1),\Phi^1(v_2))$, and so on.
The identity $\Li$ homomorphism $\mathbb{1}: V \ra V$ corresponds to the identity map $\ovl{S}V \ra \ovl{S}V$, or equivalently $\mathbb{1}^1: V \ra V$ is the identity map and $\mathbb{1}^k \equiv 0$ for all $k \in \Z_{\geq 2}$.

\sss

Trees arise naturally in $\Li$ contexts, and it is sometimes convenient to work with trees with {\em ordered} leaves.
\begin{definition}\label{def:treesor}
For $k \in \Z_{\geq 1}$, let $\treesor_k$ denote the set of (isomorphism classes of) rooted trees with $k$ {\em ordered} leaves and no bivalent vertices. 
  \end{definition}
\NI Note that the ordering of the leaf vertices of $T \in \treesor_k$ amounts to a bijection between $\Vleaf(T)$ and $\{1,\dots,k\}$.
We will also refer to the edges connected to the leaf vertices as the {\bf leaf edges},
and the edge connected to the root vertex as the {\bf root edge}.
The set $\treesun_k$ is the quotient $\treesor_k/\Sigma_k$ by the natural symmetric group action on $\treesor_k$ which reorders the leaves, and the stabilizer of a tree $T \in \treesor_k$ is the automorphism group $\aut(T)$ of $T$.

\sss

The following proposition is proved using standard techniques from homological perturbation theory. One can formulate a more general version without any grading restrictions,
but here we state a simplified version which suffices for our purposes.
\begin{prop}\label{prop:inverting_L_inf}
 Let $V$ and $W$ be evenly graded $\Li$ algebras over $\Q$, and let $\Phi: V \ra W$ be an $\Li$ homomorphism 
such that the linear map $\Phi^1: V \ra W$ is invertible. Then there exists an $\Li$ homomorphism
$\Psi: W \ra V$ such that 
\begin{align*}
\Psi \circ \Phi = \1_{V}\;\;\;\;\;\text{and}\;\;\;\;\; \Phi \circ \Psi = \1_{W}.
\end{align*}

Moreover, $\Psi$ is given explicitly as follows. We first set $\Psi^1: W \ra V$ to be the inverse of $\Phi^1: V \ra W$.
For $k \geq 2$, $T \in \treesor_k$, and $w_1,\dots,w_k \in W$, define
$\Psi^T(w_1,\dots,w_k)$ as follows.
Start by labeling the $i$th leaf edge of $T$ by $\Psi^1(w_i)$ for $i = 1,\dots,k$.
Recursively, for each internal vertex $v \in \Vin(T)$, say with $j$ incoming edges, 
label the outgoing edge by the result after applying $\Psi^1 \circ \Phi^j$ to the corresponding labels of its incoming edges.
We define $\Psi^T(w_1,\dots,w_k)$ to be the resulting label on the root edge, and finally put
\begin{align*}
\Psi^k(w_1,\dots,w_k) = \sum_{T \in \treesor_k} (-1)^{|\Vin(T)|}\Psi^T(w_1,\dots,w_k).
\end{align*}
\end{prop}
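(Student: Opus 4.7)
The plan is to take the stated tree formula as the definition of $\Psi$, verify $\Psi \circ \Phi = \1_{V}$ directly by a combinatorial cancellation over trees, and then deduce $\Phi \circ \Psi = \1_{W}$ by invoking invertibility of the coalgebra map $\wh{\Phi}$. With $\Psi^1 := (\Phi^1)^{-1}$ (which exists by hypothesis) and $\Psi^k$ for $k \geq 2$ defined by the tree formula in the statement, well-definedness is immediate: each $\Psi^T$ is built by iterating the explicit operations $\Psi^1$ and $\Psi^1 \circ \Phi^j$.

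The heart of the proof is computing $(\Psi \circ \Phi)^k(v_1,\dots,v_k)$. Unfolding the $\Li$ composition formula together with the tree formula for $\Psi^s$ rewrites the composition as a sum over triples $(s, (S_1,\dots,S_s), T)$ consisting of an ordered partition $\{1,\dots,k\} = S_1 \sqcup \cdots \sqcup S_s$ and a tree $T \in \treesor_s$, each summand being the tree value in which the $i$-th leaf of $T$ receives input $\Phi^{|S_i|}(v_{S_i})$. I would then reindex the sum by the \emph{blow-up} tree $T' \in \treesor_k$ obtained from $T$ by replacing the $i$-th leaf with a corolla of $|S_i|$ new leaves labelled by $S_i$ (leaving the leaf intact when $|S_i|=1$).

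The key combinatorial claim is that, for fixed $T' \in \treesor_k$, the triples $(s, (S_1,\dots,S_s), T)$ producing $T'$ are in natural bijection with subsets $\mathcal{M} \subset \Vmov(T')$: given $\mathcal{M}$, one recovers $T$ by collapsing each $v \in \mathcal{M}$ together with its corolla of leaf children into a single leaf. Under this bijection $\Vin(T) = \Vin(T') \setminus \mathcal{M}$, and a direct check shows that the tree value at $T'$ does not depend on $\mathcal{M}$: at each movable vertex $v$ of $T'$ with $j$ leaf children, the contribution is $\Psi^1 \circ \Phi^j$ applied to the child labels, whether $v$ is treated as a blown-up leaf of $T$ or as an honest internal vertex of $T$. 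Thus the total coefficient of the tree value at $T'$ is
\begin{align*}
\sum_{\mathcal{M} \subset \Vmov(T')}(-1)^{|\Vin(T')|-|\mathcal{M}|} = (-1)^{|\Vin(T')|}(1-1)^{|\Vmov(T')|},
\end{align*}
which vanishes whenever $\Vmov(T') \neq \emptyset$. For $k \geq 2$ every tree in $\treesor_k$ has a movable vertex (any topmost internal vertex is one), so $(\Psi \circ \Phi)^k = 0$; and for $k = 1$ the lone tree gives $\Psi^1 \circ \Phi^1 = \1_V$.

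Finally, to promote this to the two-sided identity $\Phi \circ \Psi = \1_{W}$, I would observe that $\wh{\Phi}: \ovl{S}V \to \ovl{S}W$ preserves the filtration by word length with invertible associated graded $(\Phi^1)^{\otimes k}$, hence is invertible as a linear map; this inverse is automatically a coalgebra map, so corresponds to an $\Li$ homomorphism $\Psi': W \to V$ with $\Phi \circ \Psi' = \1_{W}$, whence $\Psi = \Psi \circ \Phi \circ \Psi' = \Psi'$. The main obstacle is the combinatorial heart of the argument: setting up the bijection with $\Vmov(T')$ carefully (respecting the ordering of leaves in the blow-up) and verifying that the tree value at $T'$ genuinely does not depend on which movable vertices are collapsed.
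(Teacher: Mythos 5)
Your proof is correct, and it overlaps with but does not coincide with the paper's argument. The paper treats both identities the same way: it unpacks $\Psi \circ \Phi = \1$ and $\Phi \circ \Psi = \1$ into recursions expressing $\Psi^k$ in terms of lower operations, and observes that the tree formula solves both --- for $\Phi \circ \Psi = \1$ this is immediate, since that recursion is exactly the root-vertex decomposition of a tree in $\treesor_k$ with the sign $(-1)^{|\Vin(T)|}$ absorbing one minus sign per internal vertex, while for $\Psi \circc \Phi = \1$ the paper merely asserts the agreement is ``easily seen.'' Your inclusion--exclusion over subsets $\mathcal{M} \subset \Vmov(T')$, with total coefficient $(-1)^{|\Vin(T')|}(1-1)^{|\Vmov(T')|}$, is precisely a careful proof of that ``easily seen'' step: the bijection between triples $(s,(S_i),T)$ and pairs $(T',\mathcal{M})$ is sound (blown-up leaves are exactly movable vertices of $T'$; the tree value does not see whether a corolla arises from $\Phi^{|S_i|}$ feeding a leaf of $T$ or from an honest internal vertex; and every $T' \in \treesor_k$ with $k \geq 2$ has a movable vertex). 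The one bookkeeping point you rightly flag is that the shuffle sum $\shufbar(k_1,\dots,k_s)$ ranges over \emph{unordered} partitions, so matching ordered partitions to the ordered leaves of $T$ requires either quotienting by the $\Sigma_s$-action or invoking the symmetry of $\Psi^s$, after which each $(T',\mathcal{M})$ is counted exactly once. Your derivation of $\Phi \circ \Psi = \1$ from invertibility of $\wh{\Phi}$ on the word-length filtration is a legitimate alternative to the paper's direct check; it trades an explicit computation for an appeal to cofreeness of $\ovl{S}W$ (so that the inverse coalgebra map is induced by an $\Li$ morphism), which is implicit in the paper's setup. Since the root-vertex recursion is the more transparent of the two, you could equally well run your direct argument on $\Phi \circ \Psi = \1$, where no cancellation is needed, and obtain $\Psi \circ \Phi = \1$ by the formal step instead.
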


\begin{example}
 We have:
\begin{itemize}
    \item $\Psi^2(x,y) = -\Psi^1\Phi^2(\Psi^1(x),\Psi^1(y))$ 
  \item $\Psi^3(x,y,z) = -\Psi^1\Phi^3(\Psi^1(x),\Psi^1(y),\Psi^1(z)) + \Psi^1\Phi^2(\Psi^1(x),\Psi^1\Phi^2(\Psi^1(y),\Psi^1(z))) + \Psi^1\Phi^2(\Psi^1(y),\Psi^1\Phi^2(\Psi^1(x),\Psi^1(z))) + \Psi^1\Phi^2(\Psi^1(z),\Psi^1\Phi^2(\Psi^1(x),\Psi^1(y)))$.
\end{itemize}
\end{example}

\begin{proof}
The relation $\Psi \circ \Phi = \1$ means that for $k \geq 2$ we must have $(\Psi \circ \Phi)^k(v_1,\dots,v_k) = 0$ for any $v_1,\dots,v_k \in V$, and this amounts to
\begin{align*}
\sum_{\substack{s \geq 1\\1 \leq k_1 \leq \cdots \leq k_s \\ k_1 + \cdots + k_s = k}}\sum_{\sigma \in \shufbar(k_1,\dots,k_s)} 
\Psi^s \circ (\Phi^{k_1}\odot \cdots \odot \Phi^{k_s})(v_{\sigma(1)},\dots,v_{\sigma(k)}) = 0,
\end{align*}
or equivalently
\begin{align*}
\Psi^k(\Phi^1(v_1),\dots,\Phi^1(v_k)) = -\sum_{\substack{1 \leq s \leq k-1\\1 \leq k_1 \leq \cdots \leq k_s \\ k_1 + \cdots + k_s = k}}\sum_{\sigma \in \shufbar(k_1,\dots,k_s)} 
\Psi^s \circ (\Phi^{k_1}\odot \cdots \odot \Phi^{k_s})(v_{\sigma(1)},\dots,v_{\sigma(k)}),
\end{align*}
i.e. for any $w_1,\dots,w_k \in W$ we must have
\begin{align*}
\Psi^k(w_1,\dots,w_k) = -\sum_{\substack{1 \leq s \leq k-1\\1 \leq k_1 \leq \cdots \leq k_s \\ k_1 + \cdots + k_s = k}}\sum_{\sigma \in \shufbar(k_1,\dots,k_s)} 
\Psi^s \circ (\Phi^{k_1}\odot \cdots \odot \Phi^{k_s})(\Psi^1(w_{\sigma(1)}),\dots,\Psi^1(w_{\sigma(k)})).
\end{align*}
This is easily seen to agree with the definition of $\Psi$ given in the statement of the proposition, which therefore necessarily satisfies $\Psi \circ \Phi = \1$.

As for the relation $\Phi \circ \Psi = \1$, we need 
\begin{align*}
\sum_{\substack{s \geq 1\\1 \leq k_1 \leq \cdots \leq k_s \\ k_1 + \cdots + k_s = k}}\sum_{\sigma \in \shufbar(k_1,\dots,k_s)} 
\Phi^s \circ (\Psi^{k_1}\odot \cdots \odot \Psi^{k_s})(w_{\sigma(1)},\dots,w_{\sigma(k)}) = 0
\end{align*}
for any $w_1,\dots,w_k \in W$, 
or equivalently
\begin{align*}
\Phi^1(\Psi^k(w_1,\dots,w_k)) = 
-\sum_{\substack{s \geq 2\\1 \leq k_1 \leq \cdots \leq k_s \\ k_1 + \cdots + k_s = k}}\sum_{\sigma \in \shufbar(k_1,\dots,k_s)} 
\Phi^s \circ (\Psi^{k_1}\odot \cdots \odot \Psi^{k_s})(w_{\sigma(1)},\dots,w_{\sigma(k)}),
\end{align*}
i.e. 
\begin{align*}
\Psi^k(w_1,\dots,w_k) = 
-\sum_{\substack{s \geq 2\\1 \leq k_1 \leq \cdots \leq k_s \\ k_1 + \cdots + k_s = k}}\sum_{\sigma \in \shufbar(k_1,\dots,k_s)} 
\Psi^1 \circ \Phi^s \circ (\Psi^{k_1}\odot \cdots \odot \Psi^{k_s})(w_{\sigma(1)},\dots,w_{\sigma(k)}),
\end{align*}
which is equivalent to our definition of $\Psi^k$.
\end{proof}

\section{Tree formula}\label{sec:tree_formula}

Before proving Theorem~\ref{thmlet:main_tree}, we recall the computation of punctured stationary descendants in ellipsoids from \cite{SDEP}, and explain how this relates to the $\Li$ formalism from the previous section. 
For the time being we take $M^{2n}$ to be any closed symplectic manifold and $A \in H_2(M)$ a homology class.
We associate to each $\veca \in \R_{>0}^n$ an evenly graded $\Li$ algebra $C_\veca$ with basis given by formal symbols $\orb^\veca_i$ with degree $|\orb^\veca_i| = -2-2i$ for $i \in \Z_{\geq 1}$.
We encode the ellipsoidal superpotential in terms of these $\Li$ algebras by putting 
$\mc_{M,A}^\veca := \wtTcount_{M,A}^\veca \,\orb^\veca_{c_1(A)-1}$ and
\begin{align*}
\exp_A(\mc_{M}^\veca) := \sum_{\substack{k \geq 1\\A_1,\dots,A_k \in H_2(M)\\A_1 + \cdots + A_k = A}} \tfrac{1}{k!}\mc^\veca_{M,A_1} \odot \cdots \odot \mc^\veca_{M,A_k}.
\end{align*}
This corresponds to collections of rigid pseudoholomorphic planes in $\wh{M}_\veca$ of total homology class $A$.

We denote the genus zero Gromov--Witten invariant of $M$ in homology class $A$ and carrying a maximal order stationary descendant by
$N_{M,A}\lll \psi^{c_1(A)-2}\pt\rrr \in \Q$ (see e.g. \cite{kock2001notes} or the discussion in \cite[\S5.3]{SDEP}).
In this note we will only need the computation of these invariants for $\CP^2$, namely 
\begin{align}\label{eq:CP2_stat_des}
N_{\CP^2,d[L]}\lll \psi^{3d-2}\pt\rrr = (d!)^{-3}
\end{align}
for all $d \in \Z_{\geq 1}$.
To first approximation, $N_{M,A}\lll \psi^{c_1(A)-2}\pt\rrr$ can be thought of as the count 
(up to a combinatorial factor) of rational pseudoholomorphic curves in $M$ in homology class $A$ with prescribed $(c_1(A)-1)$-jet at a point. However, compared with the corresponding local tangency $\lll \mathcal{T}^{c_1(A)-2}\pt\rrr$, the descendant counts typically receive extra contributions from boundary strata with ghost components.

Let us introduce one more evenly graded $\Li$ algebra $C_o$ with basis given by formal symbols $\dg_i$ for $i \in \Z_{\geq 1}$.
Put $\mc_{M,A}^o := N_{M,A}\lll \psi^{c_1(A)-2}\pt \rrr \,\dg_{3d-1}$ and
\begin{align*}
\exp_A(\mc_{M}^o) := \sum_{\substack{k \geq 1\\A_1,\dots,A_k \in H_2(M)\\A_1 + \cdots + A_k = A}} \tfrac{1}{k!}\mc^o_{M,A_1} \odot \cdots \odot \mc^o_{M,A_k}.
\end{align*}

\begin{thm}[\cite{SDEP}]\label{thm:punc_desc}
For each $\veca \in \R_{>0}^n$ we have 
\begin{align}\label{eq:stretch_des}
\wh{\aug}_\veca(\exp_A(\mc_{M}^\veca)) = \exp_A(\mc_{M}^o),
\end{align}
where $\aug_\veca: C_\veca \ra C_o$ is the $\Li$ homomorphism defined by
\begin{align}\label{eq:punc_des_comp}
\aug_\veca^k(\orb^\veca_{i_1},\dots,\orb^\veca_{i_k}) = \frac{1}{(\Ga^\veca_{i_1}+\cdots+\Ga^\veca_{i_k})!}
\end{align}
for each $k,i_1,\dots,i_k \in \Z_{\geq 1}$.
\end{thm}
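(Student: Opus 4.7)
The plan is to establish Theorem~\ref{thm:punc_desc} via symplectic field theory (SFT) neck-stretching along $\bdy E(\eps\veca) \subset M$. The invariant $N_{M,A}\lll \psi^{c_1(A)-2}\pt\rrr$ appearing on the right-hand side records rigid rational curves in $M$ in class $A$ carrying a maximal-order descendant constraint at a point, and I would arrange this point to lie inside $\iota(E(\eps\veca))$. As the neck is stretched to infinity, such a curve converges in the SFT sense to a broken pseudoholomorphic building whose bottom level sits in $\wh{M}_\veca$ and whose top level sits in the symplectic completion of $E(\veca)$, possibly with intermediate symplectization levels of $\bdy E(\veca)$ in between.

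The structural step is that a generic limiting building decomposes as follows: a ``top'' part, consisting of the ellipsoid-side curve together with any intermediate cascade, is rigid, carries the descendant constraint, and ends on $k$ Reeb orbits $\gamma_{i_1},\dots,\gamma_{i_k}$ of $\bdy E(\veca)$; the ``bottom'' is an unordered collection of $k$ rigid index zero planes in $\wh{M}_\veca$ whose positive ends match these orbits. Writing $A = A_1 + \cdots + A_k$, each bottom plane in class $A_j$ contributes $\wtTcount_{M,A_j}^\veca$, the factor $\tfrac{1}{k!}$ in $\exp_A(\mc_M^\veca)$ accounts for the unordered matching of ends, and the intermediate cascade inside the top realizes iterated applications of the maps $\aug_\veca^j$---precisely the coalgebra structure of $\wh{\aug}_\veca$ applied to $\exp_A(\mc_M^\veca)$. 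Since distinct closed descendant configurations stretch independently, this multiplicativity also accounts for the outer $\exp_A$ on both sides.

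The essential local input is the count of a single top-level curve: one must show that rigid rational curves in the completion of the ellipsoid carrying one stationary descendant point and $k$ negative ends at Reeb orbits indexed by $i_1,\dots,i_k$ are counted with weight exactly $\tfrac{1}{(\Ga^\veca_{i_1}+\cdots+\Ga^\veca_{i_k})!}$. This is the content of \eqref{eq:punc_des_comp}, and it can be extracted either by degenerating $E(\veca)$ to a weighted projective space---where rational curves tangent to toric divisors are enumerated by lattice point configurations packaged by the step path $\Ga^\veca_\bullet$---or directly via an integrable model on the ellipsoid. The factorial denominator is the natural multinomial normalization coming from the prescribed jet and from the action-ordered Reeb spectrum.

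The main obstacle will be analytic. One must set up virtual perturbations (or obstruction bundle gluing) compatible with SFT compactness so that the decomposition above holds not merely at the level of stratified moduli but as an equality of rational virtual counts: multiply covered negative ends must be handled carefully, spurious ghost contributions on the ellipsoid side beyond those absorbed into the closed descendant invariant must be excluded, and the cascade structure of the top level must be shown to reproduce exactly the coalgebra formula for $\wh{\aug}_\veca$. Granting these, matching combinatorics on both sides of the resulting identity yields \eqref{eq:stretch_des}.
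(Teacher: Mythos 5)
A preliminary remark on the comparison: the paper does not prove Theorem~\ref{thm:punc_desc} at all --- it is imported from \cite{SDEP}, and the only thing the present text supplies is the short geometric gloss following the statement (namely that \eqref{eq:stretch_des} is the algebraic shadow of neck stretching closed descendant curves along $\bdy E(\veca)$, and that \eqref{eq:punc_des_comp} records punctured descendant counts in $\wh{E}(\veca)$). Your outline reproduces that gloss faithfully, so at the level of strategy you are describing the intended proof.

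As a proof, however, the proposal has a genuine gap: it defers exactly the two items in which the entire content of the theorem resides. First, the local computation \eqref{eq:punc_des_comp} is the substantive input; you name two possible routes (degeneration to a weighted projective space, or an integrable model) but carry out neither, and the precise way the lattice path $\Ga^\veca_\bullet$ enters --- via the action/Conley--Zehnder ordering of the Reeb orbits of $\bdy E(\veca)$ and the index count for the jet condition --- is exactly what must be derived rather than attributed to a ``natural multinomial normalization.'' Second, the identification of the limiting buildings with the coalgebra expression $\wh{\aug}_\veca(\exp_A(\mc_M^\veca))$ requires controlling symplectization levels (trivial cylinders versus branched covers), multiply covered negative ends, and ghost components carrying the $\psi$-class; on the last point, note that the paper itself warns that descendant invariants \emph{do} receive contributions from ghost strata (this is why $N_{M,A}\lll \psi^{c_1(A)-2}\pt\rrr$ differs from the local tangency count), so these cannot simply be ``excluded'' as you suggest --- they must be shown to be absorbed correctly into the terms of \eqref{eq:stretch_des}. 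You flag all of this as ``the main obstacle'' but give no argument, so what you have is a correct roadmap to the proof in \cite{SDEP}, not a proof.
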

\NI Here $\Ga^\veca_0,\Ga^\veca_1,\Ga^\veca_2,\dots$ is the higher dimensional generalization of the lattice path $\Ga_0^a,\Ga_1^a,\Ga_2^a,\dots$, namely for $\veca$ rationally independent $\Ga_k^\veca$ is the tuple $(i_1,\dots,i_n) \in \Z_{\geq 0}^n$ which minimizes $\max\{a_1i_1,\dots,a_ni_n\}$ subject to $i_1+\cdots+i_n = k$.

At this point the statement of Theorem~\ref{thm:punc_desc} is formally self-contained, but we should give at least some geometric intuition. In essence, \eqref{eq:stretch_des} is the algebraic relation given by neck stretching closed curve stationary descendants in $M$ along the boundary of (a rescaling of) the ellipsoid $E(\veca)$.
Meanwhile, \eqref{eq:punc_des_comp} is the computation of punctured curve stationary descendants in the symplectic completion of $E(\veca)$.
More precisely, for $\veca \in \R_{>0}$ with rationally independent components, we identify $\orb^\veca_k$ with the Reeb orbit of $k$th smallest action (or equivalently Conley--Zehnder index $n-1+2k$) in $\bdy E(\veca)$.
Then $\aug_\veca^k(\orb^\veca_{i_1},\dots,\orb^\veca_{i_k})$ encodes the count of rational pseudoholomorphic curves in $\wh{E}(\veca)$ with positive punctures asymptotic to the Reeb orbits $\orb_{i_1}^\veca,\dots,\orb^\veca_{i_k}$ and carrying the stationary descendant condition $\lll \psi^{c_1(A)-2}\pt\rrr$ (see \cite[\S3.2]{SDEP}).

\sss

Let $\eta_\veca: C_o \ra C_\veca$ denote $\Li$ homomorphism inverse to $\aug_\veca: C_\veca \ra C_o$, whose explicit construction is provided by Proposition~\ref{prop:inverting_L_inf}.
In particular, $\eta_\veca^1$ is the linear inverse of $\aug_\veca^1$, i.e. for $k \in \Z_{\geq 1}$ we have
\begin{align*}
\aug_\veca^1(\orb_k^\veca) = \frac{\dg_k}{(\Ga^\veca_k)!}\;\;\;\;\;\text{and}\;\;\;\;\; \eta_\veca^1(\dg_k) = (\Ga^\veca_k)! \orb_k^\veca.
\end{align*}

As above, let $\pi_1: \ovl{S}C_a \ra C_a$ denote the projection to tensors of word length one.
Applying $\pi_1 \circ \wh{\eta}_\veca$ to both sides of ~\eqref{eq:stretch_des} gives
\begin{align*}
\pi_1(\exp_A(\mc_M^\veca)) = (\pi_1 \circ \wh{\eta}_\veca)(\exp_A(\mc_M^o)),
\end{align*}
i.e. 
\begin{align*}
\mc_{M,A}^\veca &= \sum_{\substack{k \geq 1\\A_1,\dots,A_k \in H_2(M)\\A_1 + \cdots + A_k = A}} \tfrac{1}{k!}\eta_\veca^k(\mc^o_{M,A_1},\dots,\mc^o_{M,A_k})\\
&= \sum_{\substack{k \geq 1\\A_1,\dots,A_k \in H_2(M)\\A_1 + \cdots + A_k = A}} \tfrac{1}{k!}\sum_{T \in \treesor_k} (-1)^{|\Vin(T)|} \eta_\veca^T(\mc^o_{M,A_1},\dots,\mc^o_{M,A_k}).
\end{align*}
Using $\mc_{M,A}^\veca = \wtTcount_{M,A}^\veca \cdot \orb_{c_1(A)-1}^\veca$ and $\mc^o_{M,A} = N_{M,A}\lll \psi^{c_1(A)-2}\pt\rrr \cdot \dg_{c_1(A)-1}$, we can rewrite the above as
\begin{multline}\label{eq:tree_recursion_T_tilde}
\wtTcount_{M,A}^\veca \cdot \orb^\veca_{c_1(A)-1} \\= \sum_{\substack{k \geq 1\\A_1,\dots,A_k \in H_2(M)\\A_1 + \cdots + A_k = A}} \tfrac{1}{k!}\left(\prod_{s=1}^k N_{M,A_s}\lll \psi^{c_1(A_s)-2}\pt\rrr \right)\sum_{T \in \treesor_k} (-1)^{|\Vin(T)|} \eta_\veca^T(\dg_{c_1(A_1)-1},\dots,\dg_{c_1(A_k)-1}).
\end{multline}

We now specialize to the case $M = \CP^2$ and $A = d[L]$, so that
\eqref{eq:tree_recursion_T_tilde} becomes
\begin{align}\label{eq:tree_recursion_T_tilde_CP2}
 \wtTcount_{d}^a \cdot \orb^a_{3d-1} = \sum_{\substack{k \geq 1\\d_1,\dots,d_k \in \Z_{\geq 1}\\d_1 + \cdots + d_k = d}} \tfrac{1}{k!(d_1!)^3\cdots(d_k!)^3}\sum_{T \in \treesor_k} (-1)^{|\Vin(T)|} \eta_a^T(\dg_{3d_1-1},\dots,\dg_{3d_k-1}).
\end{align}
Here similar to above we put $\orb^a_k := \orb^{(1,a)}_k$, $\aug_a := \aug_{(1,a)}$, and so on.

Our goal is to rewrite \eqref{eq:tree_recursion_T_tilde_CP2} as a sum over trees with exactly $d$ leaves.
We will introduce relevant notation as we need it.
\begin{notation}
Given $d_1,\dots,d_k \in \Z_{\geq 1}$ with $d_1+\cdots+d_k = d$, let $\calP(d_1,\dots,d_k)$ denote the set of surjections $h: \{1,\dots,d\} \twoheadrightarrow \{1,\dots,k\}$
such that $|h^{-1}(i)| = d_i$ for $i = 1,\dots,k$.  
\end{notation}
\NI Then we can rewrite \eqref{eq:tree_recursion_T_tilde_CP2} temporarily more redundantly as 
\begin{align*}
\wtTcount_{d}^a \cdot \orb^a_{3d-1} &= \sum_{\substack{k \geq 1\\d_1,\dots,d_k \in \Z_{\geq 1}\\d_1 + \cdots + d_k = d}} \frac{1}{k!(d_1!)^3\cdots(d_k!)^3}{\binom{d}{d_1,\dots,d_k}}^{-1}\sum_{h \in \calP(d_1,\dots,d_k)}\sum_{T \in \treesor_k} (-1)^{|\Vin(T)|} \eta_a^T(\dg_{3d_1-1},\dots,\dg_{3d_k-1}).
\end{align*}
\begin{notation} \hfill
\begin{enumerate}
\item   Put 
  \begin{align*}
   \treesor_k(\{1,\dots,d\}):= \left\{(T,h)\;|\; T \in \treesor_k,\; h: \{1,\dots,d\} \twoheadrightarrow \{1,\dots,k\}\right\},
\end{align*}
i.e. $\treesor_k(\{1,\dots,d\})$ is the set of trees $T \in \treesor_k$ 
equipped with a partition of $\{1,\dots,d\}$ into $k$ parts labeled by the leaves of $T$.

\item Given $(T,h) \in \treesor_k(\{1,\dots,d\})$, put 
\begin{align*}
C_{(T,h)} := \frac{1}{(d_1!)^2\cdots (d_k!)^2}\;\;\;\;\;\text{and}\;\;\;\;\; \aug_a^{(T,h)}(\dg_\bullet) := \aug_a^T(\dg_{3d_1-1},\dots,\dg_{3d_k-1}),
\end{align*}
where $d_i := |h^{-1}(i)|$ for $i = 1,\dots,k$.
\end{enumerate}
\end{notation}
We then have
\begin{align*}
\wtTcount_{d}^a \cdot \orb^a_{3d-1} &= \sum_{k \geq 1}\sum_{(T,h) \in \treesor_k(\{1,\dots,d\})} \tfrac{1}{k!d!}\cdot C_{(T,h)} \cdot   (-1)^{|\Vin(T)|}  \eta_a^{(T,h)}(\dg_\bullet).
\end{align*}

\begin{notation}
Put \begin{align*}
\treesun_k(\{1,\dots,d\}) = \{(T,h)\;|\; T \in \treesun_k,\; h: \{1,\dots,d\} \twoheadrightarrow \Vleaf(T) \},
\end{align*}
i.e. an element of $\treesun_k(\{1,\dots,d\})$ is a tree $T \in \treesun_k$ with $k$ unordered leaves equipped with a partition of $\{1,\dots,d\}$ into $k$ parts labeled by the leaves of $T$. 
Put also 
\begin{align*}
\treesun(\{1,\dots,d\}) := \bigcup\limits_{1 \leq k \leq d} \treesun_k(\{1,\dots,d\}).
\end{align*}

\end{notation}
\NI Note that the symmetric group $\Sigma_k$ acts freely on $\treesor_k(\{1,\dots,d\})$ via $\sigma \cdot (T,h) = (T',\sigma \circ h)$, where $T'$ is given by reordering the leaves of $T$ according to the permutation $\sigma \in \Sigma_k$,
and the quotient is identified with $\treesun_k(\{1,\dots,d\})$.
We then have
\begin{align}\label{eq:just_before_bij}
\wtTcount_{d}^a \cdot \orb^a_{3d-1} &= \sum_{(T,h) \in \treesun(\{1,\dots,d\})}  \tfrac{1}{d!} \cdot C_{(T,h)} \cdot   (-1)^{|\Vin(T)|}  \eta_a^{(T,h)}(\dg_\bullet).
\end{align}

Observe that there is a natural map
\begin{align*}
\zeta: \{(T,\calS)\;|\; T \in \treesor_d, \calS \subset \Vmov(T)\} \ra \treesun(\{1,\dots,d\}),
\end{align*}
defined as follows.
Given $(T,\calS)$, let $T'$ be the tree obtained by removing all edges and vertices strictly above $v$, for each $v \in \calS$.
Note that each $v \in \calS$ becomes a leaf in $T'$.
We define a surjection $h: \{1,\dots,d\} \twoheadrightarrow \Vleaf(T')$ in such a way that
\begin{itemize}
  \item if $v \in \Vleaf(T')$ corresponds to a leaf of $T$, then $h^{-1}(v)$ is the original label of $v$
  \item for $v \in \calS$, $h^{-1}(v)$ is the set of labels of leaves lying above $v$ in $T$.
\end{itemize}
We put $\zeta(T,\calS) = (T',h)$.

In fact, $\zeta$ is a bijection, with inverse map
\begin{align*}
\zeta^{-1}: \treesun(\{1,\dots,d\}) \ra \{(T,\calS)\;|\; T \in \treesor_d, \calS \subset \Vmov(T)\}
\end{align*}
described as follows.
Given $(T,h) \in \treesun(\{1,\dots,d\})$, for each $v \in \Vleaf(T)$ such that $|h^{-1}(v)| \geq 2$ we add $|h^{-1}(v)|$ new leaf vertices, each joined to $v$ by a new leaf edge.
By construction the resulting tree $T'$ comes with a natural bijection $\{1,\dots,d\} \xrightarrow{\sim} \Vleaf(T')$, i.e. $T' \in \treesor_d$.
Also, each $v \in \Vleaf(T)$ satisfying $|h^{-1}(v)| \geq 2$ naturally corresponds to a movable vertex in $T'$, and we denote the set of these by $\calS \subset \Vmov(T')$.
Then we have $\zeta^{-1}(T,h) = (T',\calS)$.

Using this bijection, we rewrite \eqref{eq:just_before_bij} as:

\begin{align}
\wtTcount_{d}^a \cdot \orb^a_{3d-1}  &=  \tfrac{1}{d!}\sum_{T \in \treesor_d}\sum_{\calS \subset \Vmov(T)}  C_{\zeta(T,\calS)} \cdot   (-1)^{|\Vin(\zeta(T,\calS))|}  \eta_a^{\zeta(T,\calS)}(\dg_\bullet).
\end{align}

It will be convenient to extend the notion of leaf number to elements of $\treesun(\{1,\dots,d\})$ as follows.
For $(T,h) \in \treesun(\{1,\dots,d\})$, we define the leaf number $\leaf(v)$ to be the cardinality of $\bigcup\limits_w |h^{-1}(w)|$, where the union is over all leaf vertices $w$ lying above $v$ (including possibly $v$ itself).

\begin{proof}[Proof of Theorem~\ref{thmlet:main_tree}]
According to Proposition~\ref{prop:inverting_L_inf}, $\eta_a^T(\dg_{3d_1-1},\dots,\dg_{3d_k-1})$ is computed as follows.
Recall that $T \in \treesor_k$ has $k$ ordered leaves. For $i = 1,\dots,k$, we label the $i$th leaf edge by $\eta_a^1(\dg_{3d_i-1}) = (\Ga^a_{3d_i-1})!\,\orb^a_{3d_i-1}$.
We then iteratively label each edge of $T$, say with source vertex $v$, by the result of applying $\eta_a^1 \circ \aug_a^j$ to the labels of the incoming edges of $v$ (here the valency of $v$ is $j+1$).

Then for $T \in \treesor_d$ and $\calS \subset \Vmov(T)$, with $\zeta(T,\calS) \in \treesun(\{1,\dots,d\})$ as defined above we have:
\begin{align}\label{eq:eta_a^T}
\eta_a^{\zeta(T,\calS)}(\dg_\bullet) = \prod\limits_{v \in \Vleaf(\zeta(T,\calS))} \left(\Ga^a_{3\leaf(v)-1}\right)! \cdot \dfrac{  \prod\limits_{v \in \Vin(\zeta(T,\calS))} \left(\Ga^a_{3\leaf(v)-1}\right)!}{\prod\limits_{v \in \Vin(\zeta(T,\calS))} \left( \sum\limits_{v' \ra v} \Ga^a_{3\leaf(v')-1}\right)! } \cdot \orb^a_{3d-1}.
\end{align}
Observe that in the case $\calS = \nil$, $\zeta(T,\nil)$ is simply $T$ itself but viewed as an element of $\treesun(\{1,\dots,d\})$, and we have
\begin{align}\label{eq:eta_a^Tnil}
\eta_a^{\zeta(T,\nil)}(\dg_\bullet) = \eta_a^{T}(\underbrace{\dg_2,\dots,\dg_2}_d) = \prod\limits_{v \in \Vleaf(T)} \left(\Ga^a_{3\leaf(v)-1}\right)! \cdot \dfrac{  \prod\limits_{v \in \Vin(T)} \left(\Ga^a_{3\leaf(v)-1}\right)!}{\prod\limits_{v \in \Vin(T)} \left( \sum\limits_{v' \ra v} \Ga^a_{3\leaf(v')-1}\right)! } \cdot \orb^a_{3d-1},
\end{align}
where
\begin{align*}
  \Ga^a_2! = 
\begin{cases}
1 & \text{if}\;\;\;1 < a < 2 \\ 2 & \text{if}\;\;\;a \geq 2.
\end{cases}
\end{align*}  
We have natural a identification $\Vin(T) \approx \Vin(\zeta(T,\calS)) \cup \calS$,
and the leaves of $\zeta(T,\calS)$ correspond to (a) leaves of $T$ not lying above any $v \in \calS$ and (b) elements of $\calS$.
Then, comparing \eqref{eq:eta_a^T} and ~\eqref{eq:eta_a^Tnil}, we have
\begin{align*}
\eta_a^{\zeta(T,\calS)}(\dg_\bullet) &= \eta_a^{\zeta(T,\nil)}(\dg_\bullet) \cdot \frac{\prod\limits_{v \in \calS}\left(\Ga^a_{3\leaf(v)-1} \right)!}{\prod\limits_{v \in \calS} (\Ga^a_2!)^{\leaf(v)}} \cdot \frac{\prod\limits_{v \in \calS}\left(\sum\limits_{v' \ra v} \Ga^a_{3\leaf(v')-1} \right)!}{\prod\limits_{v \in \calS}\left(\Ga^a_{3\leaf(v)-1} \right)! }
\\&= \eta_a^{\zeta(T,\nil)}(\dg_\bullet) \cdot \prod\limits_{v \in \calS} \frac{(\leaf(v)\Ga^a_2)!}{(\Ga^a_2!)^{\leaf(v)}}.
\end{align*}

Note that we have $C_{\zeta(T,\calS)} = \left(\prod\limits_{v \in \calS}\leaf(v)! \right)^{-2}$, 
 and hence
\begin{align*}
\wtTcount_{d}^a \cdot \orb^a_{3d-1}  &= \tfrac{1}{d!}\sum_{T \in \treesor_d}(-1)^{|\Vin(T)|} \sum_{\calS \subset \Vmov(T)}   C_{\zeta(T,\calS)} \cdot   (-1)^{|\calS|}\cdot  \eta_a^{\zeta(T,\calS)}(\dg_\bullet)
\\&= \tfrac{1}{d!}\sum_{T \in \treesor_d}(-1)^{|\Vin(T)|}\cdot \eta_a^{\zeta(T,\nil)}(\dg_\bullet)\sum_{\calS \subset \Vmov(T)}   \left(\prod\limits_{v \in \calS}\leaf(v)! \right)^{-2} \cdot   (-1)^{|\calS|} \cdot \prod\limits_{v \in \calS} \frac{(\leaf(v)\Ga^a_2)!}{(\Ga^a_2!)^{\leaf(v)}}.
\end{align*}
We also have
$$\eta_a^{\zeta(T,\nil)}(\dg_\bullet) =  (\Ga^a_2!)^{d} \cdot \prod\limits_{v \in \Vin(T)}\tfrac{  \left(\Ga^a_{3\leaf(v)-1}\right)!}{\left( \sum\limits_{v' \ra v} \Ga^a_{3\leaf(v')-1}\right)! } \cdot \orb^a_{3d-1},$$
and hence
\begin{align*}
\wtTcount_{d}^a &= \tfrac{1}{d!}(\Ga^a_2!)^{d}\sum_{T \in \treesor_d}(-1)^{|\Vin(T)|} \cdot \prod\limits_{v \in \Vin(T)}\tfrac{ \left(\Ga^a_{3\leaf(v)-1}\right)!}{\left( \sum\limits_{v' \ra v} \Ga^a_{3\leaf(v')-1}\right)! }  \cdot \sum_{\calS \subset \Vmov(T)}     (-1)^{|\calS|} \cdot \prod\limits_{v \in \calS} (\leaf(v)!)^{-2}\frac{(\leaf(v)\Ga^a_2)!}{(\Ga^a_2!)^{\leaf(v)}}
\\&= \tfrac{1}{d!}(\Ga^a_2!)^{d}\sum_{T \in \treesor_d}(-1)^{|\Vin(T)|} \cdot \prod\limits_{v \in \Vin(T)}\tfrac{ \left(\Ga^a_{3\leaf(v)-1}\right)!}{\left( \sum\limits_{v' \ra v} \Ga^a_{3\leaf(v')-1}\right)! }  \cdot \prod\limits_{v \in \Vmov(T)} \left(1 - (\leaf(v)!)^{-2}\frac{(\leaf(v)\Ga^a_2)!}{(\Ga^a_2!)^{\leaf(v)}}\right).
\end{align*}
Finally, we can write this as a sum over $\treesun_d$ by recalling that there is a natural forgetful map $\treesor_d \ra \treesun_d$ and the fiber of any $T \in \treesun_d$ has cardinality $\frac{d!}{|\aut(T)|}$.
\end{proof}

\bibliographystyle{math}
\bibliography{biblio}

\begin{thebibliography}{McD}

\bibitem[Hin]{hind2015some}
Richard Hind.
\newblock {Some optimal embeddings of symplectic ellipsoids}.
\newblock {\em Journal of Topology} {\bf 8}(2015), 871--883.

\bibitem[Koc]{kock2001notes}
Joachim Kock.
\newblock {Notes on psi classes}.
\newblock (2001).

\bibitem[McD]{Mint}
Dusa McDuff.
\newblock {A remark on the stabilized symplectic embedding problem for
  ellipsoids}.
\newblock {\em Eur. J. Math.} {\bf 4}(2018), 356--371.

\bibitem[MS1]{McDuffSiegel_counting}
Dusa McDuff and Kyler Siegel.
\newblock {Counting curves with local tangency constraints}.
\newblock {\em Journal of Topology} {\bf 14}(2021), 1176--1242.

\bibitem[MS2]{cusps_and_ellipsoids}
Dusa McDuff and Kyler Siegel.
\newblock {Ellipsoidal superpotentials and singular curve counts}.
\newblock {\em arXiv:2308.07542} (2023).

\bibitem[MS3]{SDEP}
Grigory Mikhalkin and Kyler Siegel.
\newblock {Ellipsoidal superpotentials and stationary descendants}.
\newblock {\em arXiv:2307.13252} (2023).

\end{thebibliography}

\end{document}